\newtheorem{thm}{Theorem}[section]
\newtheorem{cor}[thm]{Corollary}
\newtheorem{lem}[thm]{Lemma}
\newtheorem{prop}[thm]{Proposition}
\newtheorem{eks}{\sc Example}
\theoremstyle{definition}
\newtheorem{defn}[thm]{Definition}
\theoremstyle{remark}
\newtheorem{rem}{Remark}[section]
\numberwithin{equation}{section}
\newcommand{\eps}{\varepsilon}
\newcommand{\Ss}{S^{\ast}}
\newcommand{\xs}{x^\ast}
\newcommand{\Xs}{X^{\ast}}
\newcommand{\Xss}{{X^{\ast\ast}}}
\newcommand{\Xsss}{X^{\ast\ast\ast}}
\newcommand{\Ys}{Y^{\ast}}
\newcommand{\ys}{y^\ast}
\newcommand{\PLR}{Principle of Local Reflexivity}
\newcommand{\HB}{\text{H{\kern -0.35em}B}}
\DeclareMathOperator{\spann}{span}
\DeclareMathOperator{\linspan}{span}
\begin{document}

\title[Almost isometric ideals in Banach spaces]{Almost isometric
  ideals in Banach spaces}

\author[T.~A.~Abrahamsen]{Trond~A.~Abrahamsen}
\address{Department of Mathematics, Agder University, Servicebox 422,
4604 Kristiansand, Norway.} \email{Trond.A.Abrahamsen@uia.no}
\urladdr{http://home.uia.no/trondaa/index.php3}

\author[V.~Lima]{Vegard Lima}
\address{{\AA}lesund University College, Postboks 1517, 6025
  {\AA}lesund, Norway.} \email{Vegard.Lima@gmail.com}

\author[O.~Nygaard]{Olav Nygaard}
\address{Department of Mathematics, Agder University, Servicebox 422,
4604 Kristiansand, Norway.} \email{Olav.Nygaard@uia.no}
\urladdr{http://home.uia.no/olavn/}

\subjclass[2010]{46B20, 46B04}
\keywords{Ideal, u-ideal, Diameter 2 property, Daugavet property.}

\begin{abstract} A natural class of ideals, almost isometric ideals, of Banach
  spaces is defined and studied. The motivation for working with this
  class of subspaces is our observation that they inherit diameter 2
  properties and the Daugavet property. Lindenstrauss spaces are known
  to be the class of Banach spaces that are ideals in every
  superspace; we show that being an almost isometric ideal in every superspace
  characterizes the class of Gurariy spaces.
\end{abstract}

\maketitle
\section{Introduction}\label{sec1}

Let $Y$ be a real Banach space and $X$ a subspace.
Recall that $X$ is an \emph{ideal} in $Y$ if $X^\perp$,
the annihilator of $X$ in $Y^*$, is the kernel
of a norm one projection on $Y^*$.
A linear operator $\varphi: X^* \to Y^*$ is called
a \emph{Hahn-Banach extension operator} if
$\varphi(x^*)(x) = x^*(x)$ and $\|\varphi(x^*)\|=\|x^*\|$
for all $x \in X$ and $x^* \in X^*$.
If for every finite dimensional subspace $E$ of $Y$
and every $\varepsilon > 0$ there exists a linear
operator $T: E \to X$ such that
$Te = e$ for all $e \in E \cap X$ and
$\|T\| \le 1 + \varepsilon$
then $X$ is said to be \emph{locally 1-complemented} in $Y$.
That these three concepts is just the same thing
looked at in three different ways dates back
to a 1972-paper of Fakhoury.
We will mainly use the locally 1-complemented viewpoint, but following
the paper \cite{GKS} we will use the term ideal.
The next theorem can be found in
\cite[Th\'eor\`eme~2.14]{Fak} (see also \cite[Theorem~3.5]{Kal}).

\begin{thm}\label{thm:fakhourykalton}
  Let $X$ be a subspace of a Banach space $Y$.
  The following statements are equivalent.
  \begin{itemize}
  \item[(i)]
    $X$ is an ideal in $Y$.
  \item[(ii)]
    There exists a Hahn-Banach extension operator
    $\varphi: X^* \to Y^*$.
  \item[(iii)]
    $X$ is locally 1-complemented in $Y$.
  \end{itemize}
\end{thm}

The connection between the extension operators
and the locally complemented subspaces was
further explored in \cite{MR2262909}.
There the following theorem can be found. 

\begin{thm}\label{thm:intro_thm-ideal}
  Let $X$ be a subspace of a Banach space $Y$.
  $X$ is an ideal in $Y$ if and only if
  there exists a Hahn-Banach extension operator
  $\varphi:X^* \to Y^*$ such that for every $\varepsilon>0$,
  every finite-dimensional subspace $E\subset Y$ and every
  finite-dimensional subspace $F\subset X^*$
  there exists $T:E\to X$ such that
  \begin{itemize}
  \item [(i1)] $Te=e$ for all $e\in X\cap E$,
  \item [(i2)] $\|Te\|\leq (1+\varepsilon)\|e\|$ for all $e\in E$, and
  \item [(i3)] $\varphi f^*(e)=f^*(Te)$ for all $e\in E,
    f^*\in F$.
  \end{itemize} 
\end{thm}

Let us now describe the content of our paper: In searching for a
natural condition ensuring that $X$ inherits the property from its
superspace $Y$ that every non-void relatively weakly open subset of
$B_Y$ has diameter 2, we observed that if the $T$ in Theorem
\ref{thm:intro_thm-ideal} can be assumed to be an $\eps$-isometry,
then this \em diameter 2 property \em passes down to $X$ from
$Y$. This observation is presented in Proposition
\ref{prop:diam2subspace}. Also, we observed that the same condition
works for the problem of inheriting the Daugavet property. The
presentation of this result can be found in Proposition
\ref{prop:daugavet}. Precise definitions and necessary background on
both diameter 2 properties and the Daugavet property are incorporated
in the presentation in Section \ref{sec3}.

The above results on inheriting the diameter 2 property and the
Daugavet property indicate that subspaces obeying the conclusion in
Theorem \ref{thm:intro_thm-ideal} with $T$ almost isometric are of
some relevance, and we think it is natural to find out what can be
said in general about such subspaces: 

\begin{defn}\label{def:super1comp}
  Let $Y$ be a Banach space and $X$ a subspace.
  $X$ is called an \em almost isometric ideal (ai-ideal) \em in $Y$
  if  for every $\eps>0$
  and every finite-dimensional subspace $E\subset Y$ there exists
  $T:E\to X$ which satisfies the condition (i1) in Theorem
  \ref{thm:intro_thm-ideal} and also
  \begin{itemize}
  \item [(ai2)] $(1+\eps)^{-1}\|e\|\leq\|Te\|\leq (1+\eps)\|e\|$ for
    $e\in E$.
\end{itemize}
\end{defn}

Note that $X$ need not be closed in the definition of (ai-)ideals.
By a perturbation argument a non-closed subspace is an (ai-)ideal
if and only if its closure is.

\begin{rem}\label{rem:superGEfdr}
  A Banach space $Y$ is \emph{finitely representable} in $X$ if
  for every finite-dimensional subspace $E\subset Y$
  there exists a $T: E \to X$ such that (ai2) holds.
  
  There is a 1-complemented isometric copy of
  $\ell_1$ in $L_1[0,1]$ (see e.g. \cite[Lemma 5.1.1]{AlKal}) and in particular
  $\ell_1$ is an ideal.
  $L_1[0,1]$ is finitely representable in
  $\ell_1$, but $\ell_1$ is not an ai-ideal in $L_1[0,1]$
  because ai-ideals inherit diameter 2 properties
  (see Proposition~\ref{prop:diam2subspace} below).
  Hence there is no $T$ that satisfies
  properties (i1) and (ai2) simultaneously.
\end{rem}

A natural question is whether the analogue of Theorem
\ref{thm:intro_thm-ideal} holds. Lindenstrauss' compactness argument
of course produces a Hahn-Banach extension operator, but the problem
is that we risk losing the $\eps$-isometry property of
$T$. It turns out that the analogue of Theorem
\ref{thm:intro_thm-ideal} is true:

\begin{thm}\label{thm:intro_thm-superideal} Assume that $X$ is an
  ai-ideal in $Y$. Then there exists a Hahn-Banach operator
  $\varphi:\Xs\to\Ys$ such that for every $\eps>0$, every
  finite-dimensional subspace $E\subset Y$ and every
  finite-dimensional subspace $F\subset \Xs$ there exists $T:E\to X$
  which satisfies the statements (i1), (ai2), and (i3).
\end{thm}

The proof of this structure result will be the starting point of
Section \ref{sec2}.   

Note that the conclusion of Theorem \ref{thm:intro_thm-superideal} is
very similar to the {\PLR}, and a Banach space $X$ is always an almost
isometric ideal
in its bidual $\Xss$. By Goldstine's theorem, in the {\PLR} setting,
the range of $\varphi:\Xs\to\Xsss$ is 1-norming for $\Xss$ (here
$\varphi$ is simply the canonical embedding of $\Xs$ into
$\Xsss$). We will see in
Proposition \ref{prop:strong-super} that it is in general true that
when the range of $\varphi$ in Theorem \ref{thm:intro_thm-ideal} is
1-norming for $Y$, then the ideal $X$ is an ai-ideal in $Y$. 

Knowing this, our next question is naturally whether for an almost
isometric ideal
the associated Hahn-Banach extension operators $\varphi$ from $\Xs$
into $\Ys$ must have range that is 1-norming for $Y$. We will see that
this is not so in general; in Example \ref{ex:not} we will see that
the 1-co-dimensional subspace $X = \{(a_n)_{n=1}^\infty \in c_0:
a_1=0\}$ of $c_0$ is a counterexample.

However, in the important case of u-ideals the $\eps$-isometry
condition of T and having 1-norming range are indeed equivalent
(Theorem \ref{thm:super-strict-u-ideal}). U-ideals were introduced and studied
in \cite{GKS}; we also give some necessary background on u-ideals in
the introduction to our Theorem \ref{thm:super-strict-u-ideal}.

To sum up, our motivation is the question of when diameter 2 properties and
the Daugavet property pass to subspaces. This leads to the concept of an
ai-ideal. Almost isometric ideals are studied in Section
\ref{sec2}, and the
results on the diameter 2 properties and the Daugavet property form
Section \ref{sec3}. 

In Section \ref{sec4} we characterize Gurariy
spaces in terms of ai-ideals: From \cite{Fak} it is known that the
Banach spaces that form an ideal in every superspace is exactly the
class of Lindenstrauss spaces. We observe in Theorem \ref{thm:gurarii}
that the class of spaces that are ai-ideals in every superspace is
the Gurariy spaces. From this it follows that Gurariy spaces
have the Daugavet property. We end the paper by proving that
Lindenstrauss spaces in general enjoy the diameter 2 properties.

We use standard Banach space notation; symbols and terms will however
be carefully explained throughout the text when we think it is helpful
to the reader. The reader only interested in the results on the
passage of diameter 2 properties or the Daugavet property to subspaces
may go directly to Section \ref{sec3}.

\section{Ai-ideals, strict ideals , and u-ideals}\label{sec2}

We start by proving our main structure theorem
which was stated in the introduction.

\begin{proof}[Proof of Theorem~\ref{thm:intro_thm-superideal}]
  We first construct $\varphi$ using a Lindenstrauss compactness
  argument.
  Order the set
  $A = \{(E,F,\varepsilon)\}$,
  where $E \subset Y$ and $F \subset X^*$ are finite-dimensional
  and $\varepsilon > 0$, by
  $(E_1,F_1,\varepsilon_1) \le (E_2,F_2,\varepsilon_2)$
  if $E_1 \subset E_2$, $F_1 \subset F_2$, and
  $\varepsilon_2 \le \varepsilon_1$.

  For $\alpha \in A$, $\alpha=(E,F,\varepsilon)$,
  choose $T_\alpha : E \to X$ satisfying (i1) and (ai2).
  Define
  $L_\alpha:Y \to X^{**}$ by $L_\alpha y=T_\alpha y$ if $y \in E$ and
  $L_\alpha y=0$ if $y \notin E$. We consider $(L_\alpha) \subset
  \Pi_{y \in Y} B_{X^{**}}(0,2\|y\|)$ which by Tychonoff
  is compact in the product weak$^*$ topology.
  Without loss of generality we assume
  $(L_\alpha)$ is convergent to some
  $S \in \Pi_{y \in Y} B_{X^{**}}(0,2\|y\|)$.
  Note that this implies that for
  every finite number of elements $(y_i)_{i=1}^n $ in $Y$ and
  $(x^{*}_j)_{j=1}^m$ in $X^*$ we have 
  \begin{equation}\label{eq:1}
    x^{*}_j(L_\alpha y_i) \to x^{*}_j(Sy_i).
  \end{equation}
  By construction $Sy=y$ for every $y
  \in X$. It is also clear that $\|S\|=1$, hence
  $\varphi = S^*|_{X^*} : X^* \to Y^*$ is a
  Hahn-Banach extension operator.
  
  Next we apply a perturbation argument modelled after \cite[Lemma
  1.2]{MR2262909} which in turn was inspired by \cite{MR0280983}.
   
  Let $(x_i^*,x_i)_{i=1}^n$ be a complete biorthogonal system
  for $F$.  Define
  \begin{equation*}
    Q = \sum_{i=1}^n i_X x_i \otimes \varphi(x_i^*).
  \end{equation*}
  Here $i_X: X \to Y$ is the identity embedding.
  Then $Q \in \mathcal{F}(Y^*,Y^*)$ is a
  projection with $Q(Y^*) = \varphi(F)$,
  and $Q^*(Y^{**}) \subset X$.
  Similarly let $P \in \mathcal{F}(E,E)$
  be a projection with $P(E) = E \cap X$.
  
  For $\alpha \in A$, $\alpha = (E,F,\varepsilon)$, let
  $(T_\alpha)$ be the net from the first paragraph.
  Define $S_\alpha : E \to X$ by
  \begin{align*}
    S_\alpha &=
    i_EP + T_\alpha(I_E - P) - Q^*(T_\alpha - i_E)(I_E-P) \\
    &=
    i_E + (I_{Y^{**}} - Q^*)(T_\alpha - i_E)(I_E-P).
  \end{align*}
  Here $i_E:E \to Y$ denotes the identity embedding. Now $S_\alpha \in
  \mathcal F(E,X)$, because $i_EP(E) = E \cap X \subset X$ and
  $Q^*(Y^{**}) \subset X$ and $P, T_\alpha$, and $i_E$ are
  finite-rank operators. We have $S_\alpha e = e$ for every $e \in E
  \cap X$ because $E \cap X = P(E)$ and $P$ is a projection.

  Let $f^* \in F$ and $e \in E$. Using $S_\alpha(E) \subset X$ we
  have
  \begin{align*}
    \langle f^*, S_\alpha e \rangle
    &=
    \langle \varphi f^*, S_\alpha e \rangle \\
    &=
    \langle \varphi f^*, i_E e \rangle
    +
    \langle \varphi f^*, (I_{Y^{**}}-Q^*)(T_\alpha-i_E)(I_E-P) e \rangle \\
    &=
    \langle \varphi f^*, i_E e \rangle
    +
    \langle (I_{Y^{*}}-Q)\varphi f^*, (T_\alpha-i_E)(I_E-P) e \rangle \\
    &=
    \langle \varphi f^*, i_E e \rangle
  \end{align*}
  since $Q(\varphi f^*) = \varphi f^*$.

  So far we have shown that $(S_\alpha)$ satisfies
  (i1) and (i3). Far out in the net the $S_\alpha$'s
  will inherit (ai2) from the $T_\alpha$'s if
  we can show that $\|S_\alpha - T_\alpha\|$
  can be made a small as we wish.
  Note that
  \begin{equation*}
    S_\alpha - T_\alpha =
    (i_E - T_\alpha)P - Q^*(T_\alpha - i_E)(I_E - P)
    = - Q^*(T_\alpha - i_E)(I_E - P)
  \end{equation*}
  since $T_\alpha e = e$ for all $e \in P(E)$.
  Thus we have
  \begin{equation*}
    \|S_\alpha - T_\alpha\| = \sup_{\|e\| = 1}\|Q^*(T_\alpha - i_E)e\|
    \le
    \sup_{\|e\| = 1} \sum_{i=1}^n \|x_i\|
    |x^*_i(T_\alpha e) - \varphi x^*_i(e)|.
  \end{equation*}

  Let $\alpha = (E,F,\varepsilon)$.
  Let $\delta > 0$ and choose a $\delta$-net
  $(e_j)_{j=1}^k$ for $S_E$.
  We choose $\beta \ge \alpha$ so that
  $|x^*_i (T_\beta e_j) - \varphi x^*_i(e_j)| < \delta$
  for every $i = 1, \ldots, n$ and $j = 1,\ldots,k$
  using \eqref{eq:1}.

  For $e \in S_E$ choose $j$ such that $\|e - e_j\| < \delta$,
  then
  \begin{align*}
    |x_i^*(T_\beta e) - \varphi x^*_i(e)| &\le
    |x_i^*(T_\beta e) - x_i^*(T_\beta e_j)|
    + |x_i^*(T_\beta e_j) - \varphi x^*_i(e_j)| \\
    & + |\varphi x^*_i(e_j) - \varphi x^*_i(e)|\\
    &\le 2\|x^*_i\|\delta + \delta
    + \|x^*_i\|\delta
    \le \delta(1+3\max_i\|x_i^*\|).
  \end{align*}
  By choosing $\delta$ small enough we get that
  $S_\beta$ satisfies (i1), (ai2) and (i3) for the
  given $\alpha = (E,F,\varepsilon)$.
  The desired $T: E \rightarrow X$ is then $S_\beta$.
\end{proof}

In the proof above we used the connection $\varphi=\Ss|_{\Xs}$
between a Hahn-Banach extension operator
$\varphi: \Xs \to \Ys$ and a norm one extension $S:Y\to\Xss$ of the
canonical embedding $k_X:X\to\Xss$. Clearly, from this connection the
existence of a Hahn-Banach extension operator and a
norm one extension of $k_X$ to $Y$ are equivalent. Moreover,
the existence of a Hahn-Banach extension operator $\varphi:\Xs\to\Ys$
is equivalent to the existence of a norm one projection $P$ on $\Ys$
with $\ker P=X^{\perp}$ and range equal to $\varphi(\Xs)$.

From the way $P, \varphi$ and $S$ are connected, one obtains
that the range of $\varphi$ (or $P$)
is 1-norming if and only if $S$ is an isometry into. This is
well-studied in the recent literature (see e.g. \cite{rao} or
\cite{LL}); these ideals are called \em strict ideals. \em 

\begin{prop}\label{prop:strong-super} Suppose $X$ is a strict ideal in
  $Y$. Then $X$ is an ai-ideal in $Y$. 
\end{prop}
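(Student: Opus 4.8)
The plan is to feed the conclusion of Theorem~\ref{thm:intro_thm-ideal} with a cleverly chosen norming subspace $F\subset\Xs$ and to read off the lower estimate in Definition~\ref{def:super1comp}(ii) from condition (iii) of that theorem. Recall that $X$ being a strict ideal means precisely that $X$ is an ideal whose Hahn--Banach extension operator $\varphi\colon\Xs\to\Ys$ has $1$-norming range, i.e. $\|y\|=\sup\{|\varphi g^\ast(y)|:g^\ast\in\BXs\}$ for every $y\in Y$. Since the upper estimate $\|Te\|\le(1+\eps)\|e\|$ and the fixed-point condition $Te=e$ on $X\cap E$ already come for free from Theorem~\ref{thm:intro_thm-ideal}, the entire content of the proposition is the lower estimate $\|Te\|\ge(1+\eps)^{-1}\|e\|$, and this is where the $1$-norming hypothesis must enter.

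First I would fix $\eps>0$ and a finite-dimensional $E\subset Y$, and choose $\delta>0$ small (to be pinned down at the end, essentially $\delta\le\eps/(2(1+\eps))$). Using compactness of the unit sphere $S_E$, pick a finite $\delta$-net $(e_j)_{j=1}^k$ of $S_E$. For each $j$, the $1$-norming property lets me select $g_j^\ast\in\BXs$ with $\varphi g_j^\ast(e_j)>1-\delta$. Now set $F=\linspan\{g_1^\ast,\dots,g_k^\ast\}\subset\Xs$ and apply Theorem~\ref{thm:intro_thm-ideal} to $E$, this $F$, and $\eps$, producing $T\colon E\to X$ satisfying (i), (ii), and (iii).

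For the lower estimate, take any $e\in S_E$ and choose $j$ with $\|e-e_j\|<\delta$. Since $Te\in X$ and $\|g_j^\ast\|\le1$, together with (iii) we have $\|Te\|\ge|g_j^\ast(Te)|=|\varphi g_j^\ast(e)|$. Because $\varphi$ is a Hahn--Banach extension operator we have $\|\varphi g_j^\ast\|=\|g_j^\ast\|\le1$, so $|\varphi g_j^\ast(e)|\ge\varphi g_j^\ast(e_j)-\|\varphi g_j^\ast\|\,\|e-e_j\|>(1-\delta)-\delta=1-2\delta$. Thus $\|Te\|\ge(1-2\delta)\|e\|$ for all $e\in E$, and the choice of $\delta$ makes $1-2\delta\ge(1+\eps)^{-1}$, giving Definition~\ref{def:super1comp}(ii).

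The step I expect to be the main obstacle is the uniformity: a single norming functional only controls $\|Te\|$ from below in one direction, whereas (ii) must hold for every $e\in E$ simultaneously. The resolution is exactly the $\delta$-net, which reduces the infinitely many directions to finitely many and lets me pack all the chosen norming functionals into one finite-dimensional $F$ before invoking Theorem~\ref{thm:intro_thm-ideal}; the Lipschitz estimate with constant $\|\varphi g_j^\ast\|\le1$ then transfers the lower bound from the net points to all of $S_E$.
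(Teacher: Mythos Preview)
Your proof is correct and follows essentially the same route as the paper's: take a $\delta$-net of $S_E$, pick for each net point a norming functional coming from $\varphi(\Xs)$, place these finitely many functionals in $F$, apply Theorem~\ref{thm:intro_thm-ideal}, and read off the lower bound $\|Te\|>1-2\delta$ via condition (iii). The only cosmetic difference is that the paper normalizes the functionals to lie in $S_{\Xs}$ while you take them in $\BXs$, which is immaterial.
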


\begin{proof}
  Let $\eps>0$, $E \subset Y$
  finite-dimensional, and $S:Y \to \Xss$ an isometric extension of
  $k_X$. Since $F = S(E) \subset \Xss$ is finite-dimensional, there
  exists by the {\PLR } $T: F \to X$ satisfying (i1) and (ai2). It is 
  clear that also the composition $T \circ S: E \to X$ satisfies (i1) and
  (ai2) and so we are done.    
\end{proof}

We now give an example which shows that the converse of Proposition
\ref{prop:strong-super} is not true. For
this example we will just need a little more background on ideals. 
An ideal $X\subset Y$ is
an \emph{M-ideal} in $Y$ if the ideal projection
$P:\Ys\to\Ys$ is an L-projection, that is,
\[\|\ys\|=\|P\ys\|+\|\ys-P\ys\|\hspace{1cm}\mbox{for
  all}\hspace{3mm}\ys\in\Ys.\]
A particular case of this is when $X$ is 1-complemented in
$Y$ by an M-projection $Q$, that is $QY=X$ and
\[\|y\|=\max\{\|Qy\|,\|y-Qy\|\}\hspace{1cm}\mbox{for all}\hspace{3mm}y\in Y,\]
in which case $X$ is called an \emph{M-summand} in $Y$. 
The M-ideal projection is unique (see
e.g. \cite[Proposition~1.2]{MR735420} or \cite[p. 2]{HWW}). Further,
if $X$
is also an M-summand, then $Q^{\ast}(\Ys)$ is weak$^\ast$ closed, hence
if $X$ is a proper subspace of $Y$ it can not be a strict ideal in $Y$.

We denote by $e_n$ the n-th standard basis vector in $c_0$ and by
$e_n^\ast$ its corresponding coordinate functional in $\ell_1$.

\begin{eks}\label{ex:not}
  The subspace $X = \{(a_n)_{n=1}^\infty \in c_0: a_1=0\}=\ker
  e_1^\ast$ of $c_0$ is  1-complemented and an ai-ideal in $c_0$.
\end{eks}

\begin{proof}
Clearly $X$ is a proper M-summand in $c_0$ complemented by the projection $Q$
putting 0 on the first coordinate, and by the above remarks we only
need to show that $X$ is an ai-ideal. 

Let $E$ be a finite-dimensional subspace of $c_0$ and let
$(x_i)_{i=1}^m$ be some $\eps$-net for $S_E$.
Find $N$ such that $|x_i(N)| < \eps$ for $i=1,2,...,m.$ Define $T: E
\to X$ by $T(y) = Qy + e_1^*(y)e_N.$
Then $T$ is obviously linear and an $\eps$-isometry on $(x_i)_{i=1}^m.$
By  \cite[Lemma 11.1.11]{AlKal}  $T$ is an almost isometry on all of $E$.
\end{proof}

As we have seen from Example \ref{ex:not} ai-ideals need not be
strict. We will now show that if some symmetry condition is imposed,
then ai-ideals indeed are strict. A subspace $X$ is said
to be a \emph{u-ideal} in $Y$ if there exists an ideal projection
$P:\Ys\to\Ys$ such that $\|I-2P\|=1$ ($P$ is \emph{unconditional}). If
the range of $P$ is 1-norming for $Y$, then $X$ is called a
\emph{strict u-ideal}
in $Y$. There can never be more than one unconditional $P$ (\cite[Lemma
3.1]{GKS}). Further, every M-ideal is a u-ideal. From \cite[Proposition
3.6]{GKS} it is known that $X$ is a u-ideal if and only if $X$ is an
ideal with the extra condition
  \begin{enumerate}
  \item[(i4)]\label{item:suid4}
    $\|e - 2T(e)\| \le (1+\varepsilon)\|e\|$ for all $e \in E$.
  \end{enumerate}

We will now assume that $X$ is a u-ideal in $Y$ and that the $T$'s
above can be chosen to be almost isometries:

\begin{defn}\label{defn:aiu-ideal}
   A subspace $X$ is called an \emph{almost isometric u-ideal}
   (ai-u-ideal) in $Y$ if for every
   $\eps>0$ and every finite-dimensional subspace $E\subset Y$ there
   exists $T:E\to X$ which satisfies the conditions (i1) and (ai2) and
   also (i4).
\end{defn}

\begin{rem} \label{rem:ai-u-ideal}
  Note that (i2) (and the right-hand inequality of
  (ai2)) follows from (i4). 

  An inspection of the proof of
  Theorem \ref{thm:intro_thm-superideal}
  shows that when $X$ is an ai-u-ideal in $Y$, it is possible to
  obtain a Hahn-Banach extension operator $\varphi:\Xs\to\Ys$ wich
  satisfies (i3). However, this observation will not be needed in what
  follows. 
\end{rem}

\begin{thm}\label{thm:super-strict-u-ideal}
  Let $X$ be a subspace of $Y$. Then $X$ is an ai-u-ideal in $Y$ if and
  only if $X$ is a strict u-ideal in $Y$.
\end{thm}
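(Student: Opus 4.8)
The plan is to show that the range of the (unique) unconditional ideal projection $P$ on $\Ys$ is $1$-norming for $Y$, which is exactly the definition of a strict u-ideal. First note that a super u-ideal is in particular an ideal satisfying the extra condition (iv), so by \cite[Proposition 3.6]{GKS} it is a u-ideal; let $\varphi:\Xs\to\Ys$ be the associated Hahn-Banach extension operator, so that $Py^\ast=\varphi(y^\ast|_X)$ and $\varphi(\Xs)$ is the range of $P$. Thus it suffices to prove that for every $y\in S_Y$ and every $\eps>0$ there is some $z^\ast\in\varphi(\Xs)$ with $\|z^\ast\|\le 1$ and $z^\ast(y)$ arbitrarily close to $1$.

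Fix $y\in S_Y$ and $\eps>0$, put $E=\spann\{y\}$, and use the super u-ideal property to find $T:E\to X$ satisfying (ii) and (iv). Choose $x^\ast\in S_{\Xs}$ with $x^\ast(Ty)=\|Ty\|$, and set $z^\ast=\varphi x^\ast$. Since $\varphi$ is a Hahn-Banach extension operator, $\|z^\ast\|=1$ and $z^\ast$ agrees with $x^\ast$ on $X$; in particular $z^\ast(Ty)=x^\ast(Ty)=\|Ty\|$ because $Ty\in X$. Writing $y=2Ty+(y-2Ty)$ and using (iv) together with the lower bound in (ii) then gives
\[
  z^\ast(y)=2\|Ty\|+z^\ast(y-2Ty)\ge 2\|Ty\|-\|y-2Ty\|\ge \frac{2}{1+\eps}-(1+\eps).
\]
Letting $\eps\to 0$ shows $\sup\{\,z^\ast(y): z^\ast\in\varphi(\Xs),\ \|z^\ast\|\le1\,\}\ge 1=\|y\|$, and the reverse inequality is automatic from $\|\varphi\|=1$. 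Hence $\varphi(\Xs)$ is $1$-norming and $X$ is a strict u-ideal.

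The point worth stressing is why one should argue at the finite-dimensional level rather than pass to the Lindenstrauss limit $S:Y\to\Xss$ of the $T_\alpha$'s. The lower estimate in (ii) is not preserved by the weak$^\ast$ limit, since the norm is only weak$^\ast$ lower semicontinuous, so $\|Sy\|$ cannot be bounded below directly from $\|T_\alpha y\|\ge(1+\eps)^{-1}$. The resolution --- and the only slightly delicate step --- is to keep $T$ finite-dimensional and let the norming functional $x^\ast$ of $Ty\in X$, extended by $\varphi$ without increase of norm, do the norming of $y$; condition (iv) is precisely what converts information about $Ty$ into information about $y$. No appeal to condition (iii) or to the limit operator $S$ is needed.
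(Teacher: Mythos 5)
Your proof is correct, and it takes a genuinely different route from the paper's. The paper never verifies the $1$-norming condition directly: it fixes $y\in Y\setminus X$, passes to the one-point extension $Z=\spann(X,\{y\})$, uses (ii) and (iv) only to derive $\bigl\|z-2Tz/\|Tz\|\bigr\|\le 1+3\eps$ for $z\in S_Z$, hence $\inf_{x\in S_X}\|z-2x\|=1$, and then quotes two external results from \cite{LL}: Theorem~2.4, which characterizes strict u-ideals in one-point extensions by exactly this infimum condition, and Proposition~2.1, which globalizes from all such $Z$ to $Y$. You bypass \cite{LL} altogether and check the definition head-on: with the unconditional projection $P$ and its extension operator $\varphi$ supplied by \cite[Proposition~3.6]{GKS} (uniqueness by \cite[Lemma~3.1]{GKS}), you norm $Ty$ by some $\xs\in S_{\Xs}$ and compute $\varphi\xs(y)=2\|Ty\|+\varphi\xs(y-2Ty)\ge 2(1+\eps)^{-1}-(1+\eps)\to 1$. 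The one delicate point is exactly the one you flag: the identity $\varphi\xs(Ty)=\xs(Ty)$ uses only that $\varphi\xs$ extends $\xs$ and that $Ty\in X$, so no compatibility of type (iii) between $T$ and $\varphi$ is required, and it is legitimate to feed your estimate to the unconditional $\varphi$ even though $T$ was chosen with no reference to it; a one-dimensional $E$ suffices, and the degenerate case $y\in X$ is trivial via (i). In fact your argument proves slightly more than the statement: the range of \emph{every} Hahn--Banach extension operator associated with a super u-ideal is $1$-norming. Your side remark about the Lindenstrauss limit is also accurate --- weak$^\ast$ lower semicontinuity allows the norm to drop in the limit, so $\|Sy\|$ inherits no lower bound from $\|T_\alpha y\|\ge(1+\eps)^{-1}$, and Example~\ref{ex:not} shows this loss genuinely occurs (there the super-ideal is not strict, so $S$ is not an isometry). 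What each approach buys: yours is self-contained and quantitative, needing only the u-ideal basics from \cite{GKS}, while the paper's is shorter modulo \cite{LL} and exhibits the theorem as a direct instance of the Lima--Lima ball characterization of strict u-ideals.
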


\begin{proof}
  Assume that $X$ is a strict u-ideal in $Y$.
  Let $E \subset Y$ and $F \subset X^*$ be finite dimensional
  subspaces. Let $\phi:X^* \to Y^*$ be the strict unconditional
  extension operator. Let $\mathcal{L}(E,X)$ denote
  the bounded linear operators from $F$ to $X$
  and let $i_E \in \mathcal{L}(E,Y)$ be the identity embedding.

  Define $\Phi : \mathcal{L}(E,X)^* \to \mathcal{L}(E,Y)^*$
  by $\Phi(e \otimes x^*) = e \otimes \phi x^*$
  as in \cite[Proposition~3.6]{GKS}.
  Using \cite[Lemma~2.2]{GKS} we find a net $(T_\alpha)$
  in $\mathcal{L}(F,X)$ converging weak$^*$ to
  $\Phi^*(i_E) \in \mathcal{L}(E,X)^{**}$ such that
  $\limsup_\alpha \|i_E - 2T_\alpha\| \le \|i_E\| = 1$.
  Applying the perturbation argument from \cite[Lemma~1.2]{MR2262909}
  (as in Theorem~1.4) we get at linear operator $T:E \to X$
  satisfying (i1), (ai2), and (i4).
  (For $T$ to become an almost isometry we may have to
  enlarge $F$.)

  Assume that $X$ is an ai-u-ideal in $Y$.
  Choose $y \in Y \setminus X$.
  Then $X$ is an ai-u-ideal in $Z = \linspan(X,\{y\})$.
  Let $z \in S_Z$ and let $E$ be a finite-dimensional subspace of $Z$
  containing $z$.
  Choose $T : E \to X^{**}$ satisfying (i1), (ai2) and (i4).
  We have $(1-\varepsilon) < (1+\varepsilon)^{-1}$ so by (ai2)
  \begin{equation*}
    (1-\varepsilon) \le \|Tz\| \le (1+\varepsilon)
  \end{equation*}
  hence
  \begin{equation*}
    | \|Tz\| - 1 | \le \varepsilon
  \end{equation*}
  Using (i4) we get
  \begin{equation*}
    \|z - 2\frac{Tz}{\|Tz\|}\|
    \le
    \|z - 2Tz\| + 2 \|Tz - \frac{Tz}{\|Tz\|}\|
    \le
    (1+\varepsilon) + 2|1 - \|Tz\| |
    \le 1+3\varepsilon,
  \end{equation*}
  which shows that
  \begin{equation*}
    \inf_{x \in S_X} \|z - 2x\| = 1.
  \end{equation*}
  By Theorem~2.4 in \cite{LL}
  $X$ is a strict u-ideal in $Z$.
  This is true for any $y \in Y$ and so, by
  Proposition~2.1 in \cite{LL},
  $X$ is a strict u-ideal in $Y$.
\end{proof}


\section{Ai-ideals inherit diameter 2 properties and the
  Daugavet property}\label{sec3}

Let $X$ be a non-trivial (real) Banach space with unit ball $B_X$.
By a \emph{slice} of $B_X$ we mean a set $S(\xs, \eps)=\{x\in
B_X\::\xs(x)>1-\eps\}$ where $\xs$ is in the unit sphere $S_{\Xs}$ of
$\Xs$ and $\eps > 0$. A \emph{finite convex combination of slices} of $B_X$
is then a set of the form \[S=\sum_{i=1}^{n}\lambda_i
S(\xs_i,\eps_i),\:\:\lambda_i \ge 0,\:\:\sum_{i=1}^{n}\lambda_i=1,\]
where $\xs_i \in S_{\Xs}$ and $\eps_i > 0$ for $i=1,2,\ldots,n$.

The relations between the following three successively stronger
properties were investigated in \cite{ALN}:

\begin{defn} \label{defn:diam2p}A Banach space $X$ has the
  \begin{enumerate}
  \item [(i)] {\it local diameter 2 property} if every slice of $B_X$ has
    diameter 2.
  \item [(ii)] {\it diameter 2 property} if every
    non-empty relatively weakly open subset of $B_X$ has diameter 2.
  \item [(iii)] {\it strong diameter 2 property} if every finite convex
    combination of slices of $B_X$ has diameter 2.
  \end{enumerate}
\end{defn}

It is not known to us whether properties (i) and (ii) really are
different. However, in the
recent paper \cite{ABL} it was established that (iii) is strictly
stronger than (ii). (The same result has independently also been discovered
by Haller, Langemets, and P{\~o}ldvere \cite{HLP}). The study of
property (ii) above goes back to Shvidkoy's work \cite{Shv} on the
Daugavet property, where a by-product is that spaces with the Daugavet
property enjoy the diameter 2 property, and to Nygaard and Werner's
paper \cite{NW} where uniform algebras are shown to have the diameter
2 property. We postpone the definition of the Daugavet property until
needed; it can be found in the introduction to Proposition
\ref{prop:daugavet} below. A \emph{uniform algebra} is a separating closed
subalgebra of a $C(K)$-space that contains the constants.  

In addition to Daugavet spaces and uniform algebras, spaces with
``big'' centralizer are also known to have the diameter 2
property. Precise definition of ``big'' centralizer can be found in
\cite{ALN} or \cite{ABG}, we will not give it here as we will not really
need it; for our purposes it is enough to know that Daugavet spaces,
uniform algebras and spaces with ``big'' centralizer form three large
classes of spaces with the diameter 2 property. 

We believe it is folklore among researchers working on the diameter 2
property that $X$ inherits the diameter 2 property from its bidual
$\Xss$, although we do not know any explicit reference for it. Here we
will show the much more general result that all the diameter 2
properties are inherited by ai-ideals.

\begin{prop}\label{prop:diam2subspace}
  Let $X$ be an ai-ideal in a Banach space $Y$. If $Y$ has the
  diameter $2$ property, then so does $X$.
\end{prop}

\begin{proof} 
  Let $\varphi$ be the associated Hahn-Banach extension operator
  from Theorem~\ref{thm:intro_thm-superideal}.
  Let $U\subset B_X$ be relatively weakly open and $\varepsilon>0$.
  We will show that for every $x_0 \in U$
  any set of the form
  \begin{equation*}
    U_\delta=\{x\in B_X\::|x^*_i(x-x_0)|<\delta, i=1,2,...,n\}
  \end{equation*}
  contains two points with distance greater than $2-\varepsilon$.
  Let
  \begin{equation*}
    V_\delta=\{y\in B_Y\::|\varphi x^*_i(y-x_0)|<\delta, i=1,2,...,n\}.
  \end{equation*}
  $V_\delta$ is relatively weakly open in $B_Y$ and
  hence has diameter 2.
  Thus we can find $z_1, z_2 \in V_\delta$
  with $\|z_1\|\le 1$, $\|z_2\|\le 1$ and $\|z_1-z_2\| > 2-\varepsilon/4$.
  Let $0 < \eta < \varepsilon/8$ and set
  $y_i = (1+\eta)^{-1} z_i$. Then $\|y_1-y_2\| > 2-\varepsilon/2$.

  Let $E = \spann\{x_0,y_1,y_2\}$ and let $F=\spann\{x^*_i\}_{i=1}^n$.
  Use Theorem~\ref{thm:intro_thm-superideal} to find
  an $\eta$-isometry $T: E \to X$.
  Then
  $\|Ty_i\| \le (1+\eta)\|y_i\| \le 1$,
  \begin{equation*}
    \|Ty_1-Ty_2\| \ge (1+\eta)^{-1}\|y_1-y_2\| > 2-\varepsilon,
  \end{equation*}
  and for $i=1,2,\ldots,n$ and $j=1,2$ we have
  \begin{equation*}
    |x^*_i(Ty_j-x_0)|=|x^*_i(Ty_j-Tx_0)|=|x^*_i(T(y_j-x_0))|=|\varphi x^*_i
    (y_j-x_0)|<\delta,
  \end{equation*}
  hence $Ty_1, Ty_2\in U_\delta$.
\end{proof}

\begin{rem}
  Note that in the proof above we only needed to be able to push every
  three-dimensional $E\subset Y$ into $X$ almost isometrically.
\end{rem}

Now we prove that also the strong diameter 2 property is inherited by
ai-ideals.

\begin{prop}\label{prop:strongdiam2subspace}
  Let $X$ be an ai-ideal in Banach space $Y$. If $Y$ has the strong
  diameter $2$ property, then so does $X$. 
\end{prop}

\begin{proof}
  Let $S \subset B_X$ be a finite convex combination of slices. $S$ is
  then of the form
    \[S=\sum_{i=1}^n\lambda_iS_i(\xs_i,\varepsilon_i),\]
  where $\xs_i \in B_\Xs, \varepsilon_i>0, \lambda_i>0$, and
  $\sum_{i=1}^n\lambda_i=1$. Now put
    \[S_\varphi=\sum_{i=1}^n\lambda_iS_{\varphi, i}(\varphi\xs_i,\varepsilon_i),\]
  where $\varphi$ is the Hahn-Banach extension operator associated with
  the ai-ideal. Note that each 
  $S_{\varphi, i}(\varphi\xs_i,\varepsilon_i)=\{y \in
  B_Y:\varphi\xs_i(y)>1-\varepsilon_i\}$ is a slice of $B_Y$. Since
  $S_\varphi$ has diameter $2$, there are for every $\eta>0$, $y_k \in
  S_\varphi$, $k=1,2$, such that $\|y_1-y_2\|>2-\eta$. Now $y_k \in
  S_\varphi$ is of the form $y_k=\sum_{i=1}^{n_k}\lambda_i
  y_{\varphi,k}^i$ where $y_{\varphi,k}^i \in
  S_{\varphi, i}(\varphi\xs_i,\varepsilon_i)$. Let
  $E=\spann(y_k,y_{\varphi,k}^{n_k})_{k,i} \subset Y$ and
  $F=\spann(\xs_i)_i \subset \Xs$. By a perturbation argument, we
  may assume that $\max_k\|y_k\|=r<1$.

  For $\delta>0$ such that $(1+\delta)\cdot r\le 1$,
  choose $T:E \to X$ which fulfills (i)-(iii) in the conclusion of
  Theorem \ref{thm:intro_thm-superideal} with this $\delta$, and
  observe that $Ty_k=\sum_{i=1}^{n_k}\lambda_i Ty_{\varphi,k}^i$. Then
  $Ty_k \in S$ since $\|Ty_k\|\le(1+\delta)\|y_k\|\le(1+\delta)\cdot r
  \le 1$, and $Ty_{\varphi,k}^i(\xs_i)=\varphi
  \xs_i(y_{\varphi,k}^i)>1-\varepsilon_i$, so $Ty_{\varphi,k}^i \in
  S_i$.

  Finally, observe that
  $\|Ty_1-Ty_2\|>(1+\delta)^{-1}(2-\eta)$ and that $\delta$ and $\eta$
  may be chosen arbitrarily small, so the diameter of $S$ must be $2$.
\end{proof}

\begin{rem}
  Note that in the proof above we can not take $E$ with just $3$
  dimensions as we could in the proof of the similar result for the
  diameter $2$ property. 
\end{rem}
  
\begin{cor}\label{cor:localdiam2subspace}  Almost isometric ideals inherit the
  local diameter 2 property.
\end{cor}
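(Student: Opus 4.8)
The plan is to read the corollary as the statement that if $Y$ has the local diameter 2 property and $X$ is a super-ideal in $Y$, then $X$ has the local diameter 2 property, and to obtain it as the one-slice specialization of the argument for Proposition~\ref{prop:strongdiam2subspace}. A slice is exactly a finite convex combination of slices with a single summand (the case $n=1$, $\lambda_1=1$), so the very same construction goes through. The point worth stressing is that with only one slice in play the argument calls on the diameter of a \emph{single} slice of $B_Y$, and hence uses merely the local diameter 2 property of $Y$ rather than the full strong version.

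Concretely, I would start from an arbitrary slice $S=S(\xs,\eps)\subset B_X$ with $\xs\in S_{\Xs}$ and pass to the slice $S_\varphi=\{y\in B_Y:\varphi\xs(y)>1-\eps\}$ of $B_Y$, where $\varphi$ is the Hahn-Banach extension operator of Theorem~\ref{thm:mainthm}; since $\varphi$ preserves norms, $\varphi\xs\in S_{\Ys}$ and $S_\varphi$ is a genuine slice. As $Y$ has the local diameter 2 property, for each $\eta>0$ I pick $y_1,y_2\in S_\varphi$ with $\|y_1-y_2\|>2-\eta$ and, after scaling by a factor close to one, arrange $\max_k\|y_k\|=r<1$. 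Then, setting $E=\spann(y_1,y_2)$ and $F=\spann\{\xs\}$ and choosing $\delta>0$ with $(1+\delta)r\le 1$, Theorem~\ref{thm:mainthm} supplies an almost-isometric local projection $T:E\to X$ satisfying (i)--(iii). The images satisfy $\|Ty_k\|\le(1+\delta)r\le 1$ and, by (iii), $\xs(Ty_k)=\varphi\xs(y_k)>1-\eps$, so $Ty_1,Ty_2\in S$; while (ii) gives $\|Ty_1-Ty_2\|>(1+\delta)^{-1}(2-\eta)$. Letting $\delta,\eta\to 0$ forces the diameter of $S$ to be $2$.

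There is no real obstacle here: the whole proof is a transcription of Proposition~\ref{prop:strongdiam2subspace} with $n=1$, and the only thing to verify is the harmless point that $S_\varphi$ is a legitimate slice, which follows from $\varphi$ being norm-preserving. I would add one remark on why the corollary does not simply drop out of Proposition~\ref{prop:diam2subspace}: a slice is indeed a relatively weakly open subset of $B_X$, so that proposition does yield the local diameter 2 property for $X$, but only under the hypothesis that $Y$ has the \emph{full} diameter 2 property; the one-slice argument above is preferable precisely because it needs only the formally weaker local diameter 2 property of $Y$.
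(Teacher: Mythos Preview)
Your proposal is correct and is exactly the paper's approach: the paper's entire proof reads ``Take $n=1$ in the proof of Proposition~\ref{prop:strongdiam2subspace},'' and you have faithfully unpacked that one-slice specialization, including the observation that only the local diameter~2 property of $Y$ is invoked. Your additional remark distinguishing this from an appeal to Proposition~\ref{prop:diam2subspace} is accurate and not in the paper, but it is a helpful clarification rather than a deviation.
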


\begin{proof} Take $n=1$ in the proof of Proposition
  \ref{prop:strongdiam2subspace}.
\end{proof}

\begin{prop}
  Let $Y$ be a Banach space. If every infinite-dimensional separable
  ideal in $Y$ has the (local, strong) diameter 2 property, then so does
  $Y$.
\end{prop}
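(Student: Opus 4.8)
The plan is to argue by contraposition via separable determination: assuming $Y$ fails the relevant property, I will produce an infinite-dimensional separable ideal $X\subset Y$ that also fails it, contradicting the hypothesis. Throughout we assume $Y$ is infinite-dimensional. The first step is to record that each failure is witnessed by only finitely much data. If $Y$ fails the local diameter 2 property there is a slice $S(\ys,\eps)$ of $B_Y$, with $\ys\in S_{\Ys}$, and some $\eta>0$ with $\operatorname{diam}S(\ys,\eps)\le 2-\eta$; if $Y$ fails the diameter 2 property there is a nonempty basic relatively weakly open set $U=\{y\in B_Y:|\ys_i(y-y_0)|<\delta,\ i=1,\dots,n\}$ with $y_0\in U$ and $\operatorname{diam}U<2$; and if $Y$ fails the strong diameter 2 property there is a convex combination $\sum_{i=1}^n\lambda_i S(\ys_i,\eps_i)$, with $\ys_i\in S_{\Ys}$, of diameter $<2$.

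Next I fix the separable subspace that has to be enlarged. Let $Z_0$ be a closed separable infinite-dimensional subspace of $Y$ containing the center $y_0$ (in the diameter 2 case) and, for each witnessing functional $\ys_i$, a sequence $(x_{i,k})_k\subset B_Y$ with $\ys_i(x_{i,k})\to 1$. Including these near-norming points guarantees that, once a separable ideal $X\supset Z_0$ has been built, the restriction $\ys_i|_X$ still has norm $1$, so it defines a genuine slice functional on $X$, and each slice $\{x\in B_X:\ys_i(x)>1-\eps_i\}$ is nonempty.

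The crux is to pass from $Z_0$ to a separable \emph{ideal}. Here I invoke the standard fact (the Sims--Yost construction) that every separable subspace of $Y$ is contained in a separable ideal $X$ with $Z_0\subset X\subset Y$; since $Z_0$ is infinite-dimensional, so is $X$. This is the only place where the ideal structure of $Y$ enters, and I expect it to be the main technical obstacle: it is obtained by iteratively ``closing up'' $Z_0$ under the local operators supplied by the ideal property, arranging that only countably many approximation tasks occur at each stage so that the resulting union remains separable. Note that the Hahn--Banach extension operator attached to $X$ plays no role in what follows.

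Finally I transfer the witness down to $X$ by restricting functionals. In each case the corresponding object in $B_X$ is visibly of the right type: the relatively weakly open set $U\cap X=\{x\in B_X:|\ys_i(x-y_0)|<\delta\}$ for the diameter 2 property, the slice $S(\ys|_X,\eps)=S(\ys,\eps)\cap X$ for the local diameter 2 property, and the convex combination $\sum_{i=1}^n\lambda_i\,(S(\ys_i,\eps_i)\cap X)$ for the strong diameter 2 property. Each of these sets is nonempty (it contains $y_0$ in the first case, and is nonempty in the other two because every restricted slice is, by the near-norming points), and each is contained in the original witness in $B_Y$, so its diameter is at most that of the witness, namely $<2$. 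Thus $X$ is an infinite-dimensional separable ideal failing the property in question, contradicting the hypothesis; hence $Y$ enjoys the property. Since the three cases run in exact parallel, this proves the statement simultaneously for the local, the ordinary, and the strong diameter 2 property.
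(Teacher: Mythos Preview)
Your proof is correct and the overall strategy---pass to a separable ideal containing the relevant data and observe that the witnessing object in $B_Y$ restricts to one of the same type in $B_X$---is the same as the paper's. The one genuine technical difference is how the norm-one condition $\|\ys_i|_X\|=1$ is secured. The paper argues directly (not by contraposition) and invokes the Heinrich--Mankiewicz refinement of the Sims--Yost lemma, choosing the separable ideal $X$ so that the prescribed functionals $\ys_i$ lie in the range $\varphi(\Xs)$ of the Hahn--Banach extension operator; since $\varphi$ is an isometric extension this forces $\ys_i|_X$ to have norm $1$. You instead use only the basic Sims--Yost statement and obtain $\|\ys_i|_X\|=1$ by throwing near-norming sequences for each $\ys_i$ into $Z_0$ beforehand. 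Your route is slightly more elementary (it does not need the extension operator at all, as you note), while the paper's route makes the connection to the ideal structure via $\varphi$ more explicit; the two are otherwise equivalent, and in both cases the key containment $S(\ys_i|_X,\eps_i)=S(\ys_i,\eps_i)\cap X$ is what transfers the witness.
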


\begin{proof}
  First let us prove the result for the strong diameter 2
  property. To this end let $\varepsilon_i > 0$ for $i = 1, \ldots,
  n$ and $S = \sum_{i=1}^k \lambda_i S_i$ a finite convex combination of
  slices $S_i = \{y \in B_Y: \ys_i(y) > 1- \eps_i\}$ of the unit ball
  of $Y$. Each slice is
  relatively weakly open and therefore contains a ball of small
  radius about a point in the slice. Thus it is possible to find a
  sequence of infinitely many linearly independent points in each
  slice. It is clearly also possible to find a linearly independent
  sequence $(y_n) \subset S$. Let $Z$ be
  the norm closure of $\mbox{span}(y_n)$. By \cite{MR675426}
  (cf. also \cite[Lemma III.4.3]{HWW}) there is a separable ideal $X$
  in $Y$ containing $Z$ such that $\spann(\ys_i)_{i = 1}^k \subset
  \varphi(\Xs)$ where $\varphi:\Xs \to \Ys$ is the Hahn-Banach
  extension operator. Now, for $i =
  1, \ldots, k$, find $\xs_i \in \Xs$ such that $\ys_i =
  \varphi(\xs_i)$. Let $S_i' = \{x \in B_X: \xs_i(x) > 1- \eps_i\} =
  \{x \in B_X: \varphi \xs_i(x) > 1- \eps_i\}$ be slices of
  the unit ball of $X$. Denote by $S' = \sum_{i=1}^k \lambda_i S_i'$ the
  corresponding convex combination of slices. Since $S'$ has diameter
  2 and $S' \subset S$, $S$ has diameter 2.

  For the local diameter 2 property the result follows by taking $k=1$
  in the argument above. 

  For the diameter 2 property let $V$ be a
  relatively weakly open subset in $B_Y$. Find $y_0 \in V$ and
  $y_i^\ast \in Y^\ast$ such that $V_\varepsilon=\{y \in B_Y:
  |y_i^\ast(y-y_0)|<\varepsilon,
  i=1,\cdots,n\} \subset V$. It is possible to choose a sequence
  $(y_n)$ of
  infinitely many linearly independent points in $V_\varepsilon$ and a
  similar argument as above will now finish the proof.
\end{proof}

Our next goal is to show that ai-ideals inherit the Daugavet
property. Let us first recall the definition of this property:

\begin{defn}\label{defn:daugavet} A Banach space $X$ has the \emph{Daugavet
  property} if, for every rank 1 operator $T:X\to X$,
\[\|T+I\|=1+\|T\|.\]
\end{defn}

In Definition \ref{defn:daugavet}, $I$ denotes the identity operator
on $X$. We will need a fundamental observation, from \cite[Lemma~2.2]{KSSW}:

\begin{lem}\label{lem:fraKadSSW}
  The following are equivalent.
  \begin{enumerate}
  \item [(i)] $X$ has the Daugavet property.
  \item [(ii)] For all $y \in S_X$, $x^* \in S_{X^*}$ and $\varepsilon > 0$
    there exists $x \in S_X$ such that $x^*(x) \ge 1-\varepsilon$
    and $\|x+y\| \ge 2-\varepsilon$.
  \end{enumerate}
\end{lem}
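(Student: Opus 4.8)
The plan is to pass back and forth through the representation of a rank-$1$ operator $T:X\to X$ as $T = x^*\otimes y$, meaning $Tx = x^*(x)\,y$, for which $\|T\| = \|x^*\|\,\|y\|$. Since both conditions are positively homogeneous in $T$, it suffices to treat the normalized case $\|x^*\| = \|y\| = 1$, i.e. $\|T\| = 1$; and since $\|I + T\| \le 1 + \|T\|$ holds automatically, the Daugavet equation $\|I+T\| = 1+\|T\|$ reduces to the single inequality $\|I+T\| \ge 2$. The bridge between the operator formulation and the geometric one in (ii) is the elementary reverse-triangle estimate
\[
\big|\,\|x + x^*(x)\,y\| - \|x + y\|\,\big| \le (1 - x^*(x))\,\|y\|,
\]
valid whenever $x^*(x) \le 1$, which lets me trade the quantity $\|(I+T)x\| = \|x + x^*(x)y\|$ for the quantity $\|x+y\|$ as soon as $x^*(x)$ is forced close to $1$.

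For the direction (ii)$\Rightarrow$(i), I would fix a rank-$1$ operator and normalize as above to $T = x^*\otimes y$ with $x^*\in S_{X^*}$, $y \in S_X$, then apply (ii) to this pair $y,x^*$ and a prescribed $\varepsilon > 0$. This yields $x\in S_X$ with $x^*(x) \ge 1-\varepsilon$ and $\|x+y\| \ge 2-\varepsilon$; feeding these into the displayed estimate gives $\|(I+T)x\| \ge (2-\varepsilon) - \varepsilon = 2 - 2\varepsilon$. As $\|x\| = 1$ and $\varepsilon$ is arbitrary, $\|I+T\| \ge 2$, which is exactly what was needed.

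For the direction (i)$\Rightarrow$(ii), given $y \in S_X$, $x^*\in S_{X^*}$ and $\varepsilon>0$, I would again form $T = x^*\otimes y$, so that $\|T\| = 1$ and the Daugavet property gives $\|I+T\| = 2$. Hence for a $\delta>0$ to be fixed there is $x\in S_X$ with $\|x + x^*(x)y\| > 2 - \delta$ (one may take $x$ on the sphere, since $I+T$ is linear and the supremum over the ball is already approached on $S_X$). The step to watch, and really the only subtlety in the whole argument, is the sign: the crude bound $\|x + x^*(x)y\| \le 1 + |x^*(x)|$ only forces $|x^*(x)|$ near $1$, not $x^*(x)$ itself. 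I resolve this by replacing $x$ with $-x$ if necessary, which flips the sign of $x^*(x)$ while leaving $\|x + x^*(x)y\|$ unchanged, so I may assume $x^*(x) > 1 - \delta$. The displayed estimate (with $\|y\|=1$) then gives $\|x+y\| \ge \|x + x^*(x)y\| - (1 - x^*(x)) > (2-\delta) - \delta = 2 - 2\delta$, and taking $\delta = \varepsilon/2$ delivers both $x^*(x) \ge 1-\varepsilon$ and $\|x+y\| \ge 2-\varepsilon$, establishing (ii).
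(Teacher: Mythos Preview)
The paper does not supply its own proof of this lemma; it simply quotes it as \cite[Lemma~2.2]{KSSW} and uses the statement. Your argument is correct and is in fact the standard proof found in that reference: write a rank-$1$ operator as $T=x^\ast\otimes y$, reduce by homogeneity to $\|T\|=1$, and pass between $\|(I+T)x\|=\|x+x^\ast(x)y\|$ and $\|x+y\|$ via the reverse-triangle estimate, handling the sign of $x^\ast(x)$ in the (i)$\Rightarrow$(ii) direction by the substitution $x\mapsto -x$. There is nothing to add or correct.
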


The next result is proved for M-ideals in \cite[Proposition 2.10]{KSSW}.

\begin{prop}\label{prop:daugavet}
  If $X$ is an ai-ideal in $Y$ and
  $Y$ has the Daugavet property, then $X$
  has the Daugavet property.
\end{prop}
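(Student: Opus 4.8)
The plan is to verify the geometric characterization of the Daugavet property given in Lemma~\ref{lem:fraKadSSW}(ii). So I fix $y \in S_X$, $\xs \in S_{\Xs}$ and $\eps > 0$, and must produce $x \in S_X$ with $\xs(x) \ge 1-\eps$ and $\|x+y\| \ge 2-\eps$. The idea is to run the corresponding construction one floor up in $Y$, where the Daugavet property is available, and then drag the resulting vector back into $X$ via a super-ideal operator $T$, exploiting that such a $T$ fixes $y$, is an almost isometry, and intertwines $\xs$ with its extension $\varphi\xs$.

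First I would pass to $Y$. Since $X \subset Y$ we have $y \in S_Y$, and since $\varphi$ is a Hahn-Banach extension operator, $\varphi\xs \in S_{\Ys}$. Choosing an auxiliary tolerance $\eta > 0$ (to be fixed at the end) and applying Lemma~\ref{lem:fraKadSSW}(ii) to the Daugavet space $Y$ with data $y$, $\varphi\xs$, $\eta$, I obtain $v \in S_Y$ such that $\varphi\xs(v) \ge 1-\eta$ and $\|v+y\| \ge 2-\eta$. Now I pull $v$ down into $X$: set $E = \mbox{span}\{y,v\} \subset Y$ and $F = \mbox{span}\{\xs\} \subset \Xs$, pick a small $\delta > 0$, and invoke Theorem~\ref{thm:mainthm} to obtain $T:E \to X$ satisfying (i)--(iii) for this $\delta$. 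Property (i) gives $Ty = y$ (as $y \in X \cap E$); property (iii) gives $\xs(Tv) = \varphi\xs(v) \ge 1-\eta$; and property (ii) gives $(1+\delta)^{-1} \le \|Tv\| \le 1+\delta$ together with $\|Tv + y\| = \|T(v+y)\| \ge (1+\delta)^{-1}(2-\eta)$.

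Since $\|Tv\| \ge (1+\delta)^{-1} > 0$, the normalized vector $x := Tv/\|Tv\| \in S_X$ is well defined, and $\xs(x) = \xs(Tv)/\|Tv\| \ge (1-\eta)/(1+\delta)$. For the norm estimate I note $|\,\|Tv\|-1\,| \le \delta$, so passing from $Tv$ to its normalization costs at most $\delta$:
\[
\|x+y\| \ge \|Tv + y\| - \big|\,\|Tv\| - 1\,\big| \ge (1+\delta)^{-1}(2-\eta) - \delta.
\]
Choosing $\eta$ and $\delta$ small enough forces both $\xs(x) \ge 1-\eps$ and $\|x+y\| \ge 2-\eps$, which is precisely condition (ii) of Lemma~\ref{lem:fraKadSSW}; hence $X$ has the Daugavet property. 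As in the remark following Proposition~\ref{prop:diam2subspace}, observe that $E$ needs only two dimensions here, so even a very weak form of the super-ideal property suffices. The only delicate point is the final normalization estimate, but it is harmless exactly because $T$ is a $\delta$-isometry, which pins $\|Tv\|$ near $1$; beyond the bookkeeping of the two small parameters there is no real obstacle.
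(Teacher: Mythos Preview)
Your proof is correct and follows essentially the same route as the paper's: verify Lemma~\ref{lem:fraKadSSW}(ii) by lifting $y$ and $\varphi\xs$ to $Y$, applying the Daugavet property there to get a suitable $v\in S_Y$, then pulling $v$ back via a $\delta$-isometry $T$ on $E=\linspan\{y,v\}$ with $F=\linspan\{\xs\}$ and normalizing $Tv$. The only cosmetic difference is that you estimate $\xs(x)\ge(1-\eta)/(1+\delta)$ by dividing, whereas the paper writes $\xs(x)\ge\varphi\xs(v)-\delta$ by subtracting; both are fine.
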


\begin{proof}
  Let $\varphi : \Xs \to \Ys$ be the (ai-) Hahn-Banach
  extension operator.

  We will show that (ii) of Lemma \ref{lem:fraKadSSW} is
  fulfilled. For this, let $y \in S_X$, $\xs \in S_{\Xs}$ and $\varepsilon > 0$.
  Consider the slice
  \begin{equation*}
    S_1 = \{x \in B_X : \xs(x) \ge 1- \varepsilon\}.
  \end{equation*}
  We will need to produce some $x\in S_1$ with $\|x\|=1$ and
  $\|y+x\|\geq 2-\eps$. Look at
  \begin{equation*}
    S = \{z \in B_Y : \varphi(\xs)(z) \ge 1- \eta\}.
  \end{equation*}
  Since $Y$ has the Daugavet property, for all $\eta>0$, there is some
  $z \in S$ with $\|z\|=1$ and
  such that
  $\|z+y\| \ge 2- \eta$.  Let $\frac{\eps}{2}>\eta>0$ and choose
  $\frac{\varepsilon}{2} > \delta > 0$ such that
  $\delta \le
  \frac{\frac{\varepsilon}{2}-\eta}{2-\frac{\varepsilon}{2}}$. Note
  that this choice gives $(1+\delta)^{-1}(2-\eta) \ge 2-\frac{\varepsilon}{2}$.

  Let $E = \linspan\{z,y\} \subseteq Y$, $F = \linspan\{x^\ast\} \subseteq
  X^\ast$ and find a corresponding $\delta$-isometry $T: E \to X$.
  Let $x = \frac{T(z)}{\|T(z)\|}$. Clearly $\|x\|=1$.
  We get 
  \begin{equation*}
    \|x - T(z)\|
    = |\|T(z)\|-1| \le \delta \le \frac{\varepsilon}{2},
  \end{equation*}
  hence
  \begin{equation*}
    \|x + y\|
    \ge \|T(z) + y\| - \|x - T(z)\|
    \ge (1+\delta)^{-1}(2-\eta) - \delta \ge 2-\varepsilon.
  \end{equation*}
 Finally,
  \begin{equation*}
    x^*(x) = x^*(T(z)) + x^*(x - T(z))
    \ge \varphi(x^*)(z) - \delta\ge 1-\eta -\delta \ge 1-\varepsilon,
  \end{equation*}
  and we conclude that $X$ has the Daugavet property. 
\end{proof}

\begin{rem} As in the proof of Proposition~\ref{prop:diam2subspace} the
  full strength of an ai-ideal was not needed in the above proof.
  We only needed $E$ 2-dimensional and $F$ 1-dimensional.
  Of course, as for the diameter 2 properties, we also get from
  Proposition~\ref{prop:daugavet} that $X$ inherits the Daugavet property from
  $\Xss$, but this is trivial since, from the definition of the
  Daugavet property, $X$ always has the Daugavet property if $\Xs$ has. 
\end{rem}

Recall Milne's theorem that every Banach space is a 1-complemented
subspace of a uniform algebra. Wojtasczyk observed in \cite[Corollary~4]{Woj92}
that the standard proof of this theorem yields a uniform algebra with
the Daugavet property, hence also the strong diameter 2 property. In
particular, diameter 2 properties does not automatically
pass to 1-complemented subspaces and hence not to ideals.

\section{Gurariy-spaces in terms of ai-ideals}\label{sec4}

Recall that a \emph{Lindenstrauss space} is a
Banach space such that the dual is an $L_1(\mu)$-space for some
(positive) measure $\mu$. Fakhoury \cite[Proposition~3.4]{Fak}
has proved the following result.
 
\begin{thm}\label{thm:lindenstrauss} For a Banach space $X$ the
  following statements are equivalent:
\begin{itemize}
  \item [(i)] $X$ is a Lindenstrauss space.
  \item [(ii)] $X$ is an ideal in every superspace.
\end{itemize}
\end{thm}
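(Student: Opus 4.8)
The plan is to prove the two implications separately, using throughout the dictionary recorded just after Theorem~\ref{thm:mainthm}: for a \emph{closed} subspace $X\subset Y$ the following are equivalent --- $X$ is an ideal in $Y$; the canonical embedding $k_X:X\to X^{\ast\ast}$ admits a norm-preserving extension $S:Y\to X^{\ast\ast}$; and there is a norm-one projection $P$ on $Y^{\ast}$ with $\ker P=X^{\perp}$ and range $\varphi(X^{\ast})$, where $\varphi=S^{\ast}|_{X^{\ast}}$ is the associated Hahn--Banach extension operator. Since any Hahn--Banach extension operator is an isometry, its range is an isometric copy of $X^{\ast}$ that is contractively complemented in $Y^{\ast}$. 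In both directions the relevant $X$ is a Banach space embedded isometrically, hence closed, so this dictionary (and, for the ``ideal'' conclusion, the Principle of Local Reflexivity noted in the introduction) applies.

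For (i)$\Rightarrow$(ii), suppose $X^{\ast}=L_1(\mu)$. Then $X^{\ast\ast}=L_\infty(\mu)$, which is a $C(K)$-space with $K$ extremally disconnected and therefore $1$-injective by the Nachbin--Goodner--Kelley theorem. Given an arbitrary superspace $Y\supset X$, the embedding $k_X:X\to X^{\ast\ast}$ has norm one, so $1$-injectivity of $X^{\ast\ast}$ lets me extend it to a norm-one operator $S:Y\to X^{\ast\ast}$ with $S|_X=k_X$. By the dictionary this norm-preserving extension produces a Hahn--Banach extension operator $\varphi:X^{\ast}\to Y^{\ast}$, so $X$ is an ideal in $Y$; as $Y$ was arbitrary, $X$ is an ideal in every superspace.

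For the converse (ii)$\Rightarrow$(i), I would feed the hypothesis a single, well-chosen superspace. Embed $X$ isometrically into $C(K)$ with $K=(B_{X^{\ast}},w^{\ast})$ via $x\mapsto\widehat{x}$. By hypothesis $X$ is an ideal in $C(K)$, so the dictionary yields a norm-one projection $P$ on $C(K)^{\ast}=M(K)$ whose range is an isometric copy of $X^{\ast}$. By Kakutani's representation theorem $M(K)$ is an abstract $L$-space, hence isometric to some $L_1(\nu)$. Thus $X^{\ast}$ is realized as the range of a contractive projection on an $L_1$-space, and invoking the classical theorem of Ando that the range of a contractive projection on $L_1$ is itself isometric to an $L_1$-space, I conclude that $X^{\ast}$ is an $L_1$-space, i.e. $X$ is a Lindenstrauss space.

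The easy direction rests only on the stable, textbook fact that biduals of $L_1$-preduals are $1$-injective. The substance of the argument --- and the step I expect to be the main obstacle to write cleanly --- is the converse, where everything is reduced to Ando's theorem on contractive projections in $L_1$. The only real care needed there is to confirm that the projection supplied by the ideal structure is genuinely contractive and has range isometric to $X^{\ast}$, which is exactly the content of the equivalence recorded after Theorem~\ref{thm:mainthm}.
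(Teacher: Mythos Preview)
The paper does not give its own proof of this theorem; it is quoted verbatim as Rao's result with a bare citation to \cite{rao}, so there is nothing in the paper to compare against.

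Your argument is correct and is essentially the classical one. The direction (i)$\Rightarrow$(ii) is exactly the standard route via $1$-injectivity of $X^{\ast\ast}=L_\infty(\mu)$, and your invocation of the dictionary following Theorem~\ref{thm:mainthm} is precisely how the paper itself translates between ``norm-one extension of $k_X$'' and ``ideal''. For (ii)$\Rightarrow$(i), reducing to a contractive projection on $M(K)$ and then applying the Ando/Douglas theorem on $1$-complemented subspaces of $L_1$ is again the textbook approach; the only point worth flagging is that the version of that theorem you need is for arbitrary (not necessarily $\sigma$-finite) $L_1$-spaces or, equivalently, for abstract $L$-spaces, which is indeed known (see e.g.\ Lacey's book, already in the paper's bibliography). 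Nothing is missing.
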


Below we will prove an analogous result for ai-ideals. For this we
will need the definition of a Gurariy space.  

\begin{defn}\label{defn:gurarii} A Banach space $X$ is called a
  \emph{Gurariy space} if it has the property that whenever $\eps>0$, $E$ is
  a finite-dimensional Banach space, $T_E:E\to X$ is isometric and $F$
  is a finite-dimensional Banach space with $E \subset F$, then there
  exists a linear operator $T_F:F\to X$ such that
\begin{itemize}
  \item [(i)] $T_F(f)=T_E(f)$ for all $f\in E$, and
  \item [(ii)] $(1+\eps)^{-1}\|f\|\leq\|T_F f\|\leq (1+\eps)\|f\|$ for
    all $f\in F$.
\end{itemize}
  If $T_F: F \to X$ may be taken to be isometric, then
  $X$ is called a \emph{strong Gurariy space}.  
\end{defn}

Gurariy proved in \cite{Gur} that Gurariy spaces
exist. Indeed, he constructed a separable such Banach space and showed
that all separable Gurariy spaces are linearly almost
isometric. Later Lusky \cite{Lus} proved that all separable
Gurariy spaces are in fact linearly isometric. The fact that
strong Gurariy spaces exist can be found in \cite{GarKu}.

Let us now state and prove a result similar to Theorem
\ref{thm:lindenstrauss} for ai-ideals.

\begin{thm}\label{thm:gurarii} For a Banach space $X$ the following
  statements are equivalent:
\begin{itemize}
  \item [(i)] $X$ is a Gurariy space.
  \item [(ii)] $X$ is an ai-ideal in every superspace.
\end{itemize}
\end{thm}

\begin{proof}  
  (i) $\Rightarrow$ (ii). Let $X$ be a subspace of $Y$, $E$ a
  finite-dimensional subspace of $Y$, and $\eps > 0$.  If $E \cap X$
  is of dimension $\ge 1$, then let $T: E \cap X \to X$ be the
  identity operator. By 
  assumption there is a linear extension $\hat T$ of $T$ satisfying
  $(1 + \eps)^{-1} \|e\| \le \|\hat{T}e\| \le (1 + \eps)\|e\|$ for
  every $e \in E$, just as needed. Now, if $E \cap X = \{0\}$, then choose
  some non-zero $x \in X$, put $E' = \spann(E, \{x\})$ and argue as
  above.

  (ii) $\Rightarrow$ (i). Let $\eps > 0$ and choose
  $\delta > 0$ such that $(1 + \delta)^2 \le 1 +\eps$. By \cite[Theorem
  3.6]{GarKu} we can assume $X$ is a subspace of a Gurariy space
  $X_G$. Now, let $E \subset F$ be finite-dimensional subspaces, and
  $T: E \to X$ linear and isometric. Since $X_G$ is a
  Gurariy space, there exists a linear extension $\hat{T}: F \to
  X_G$ of $T$ with $(1 + \delta)^{-1}\|f\| \le \|T(f)\| \le (1 +
  \delta)\|f\|$ for every $f \in
  F$. Put $H = \hat{T}(F)$. Since $X$ is an ai-ideal in
  $X_G$, there
  exists an operator $S: H \to X$ satisfying $(1 + \delta)^{-1}\|h\|
  \le \|Sh\| \le (1 + \delta)\|h\|$ for every $h \in H$ such
  that $Sh = h$ for every $h \in H \cap 
  X$. It follows that $S \circ \hat{T}: F \to X$ is a linear extension
  of $T$ satisfying $(1 + \eps)^{-1}\|f\|
  \le \|S \circ \hat{T}(f)\| \le (1 + \eps)\|f\|$ for every $f \in F$,
  which is exactly as desired.
\end{proof}

\begin{rem}
  It follows from the techniques used in \cite{GarKu} that every
  non-separable Banach space can be isometrically embedded in a strong
  Gurariy space. Thus by arguing as in Theorem \ref{thm:gurarii}
  it is easily seen that strong Gurariy spaces are exactly the
  spaces that are ai-ideals with $\eps = 0$ in every superspace.  
\end{rem}

\begin{cor}\label{cor:gurarii1} The separable Gurariy space is
  the only separable Banach space that is an ai-ideal in
  every superspace.
\end{cor}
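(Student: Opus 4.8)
The plan is to read Corollary~\ref{cor:gurarii1} as an immediate consequence of the equivalence already established in Theorem~\ref{thm:gurarii}, combined with the classical uniqueness of the separable Gurari{\u\i} space. First I would invoke Theorem~\ref{thm:gurarii}: a Banach space $X$ is a super-ideal in every superspace if and only if $X$ is a Gurari{\u\i} space. This step carries all of the genuine mathematical content, and it reduces the corollary to the purely structural statement that, up to linear isometry, there is exactly one separable Gurari{\u\i} space.

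Second, I would appeal to Lusky's theorem \cite{Lus}, quoted right after Definition~\ref{defn:gurarii}, which asserts precisely that all separable Gurari{\u\i} spaces are linearly isometric; together with Gurari{\u\i}'s original existence result \cite{Gur} this shows that the separable Gurari{\u\i} space exists and is unique up to isometry. Chaining the two facts, a separable space is a super-ideal in every superspace (Theorem~\ref{thm:gurarii}) exactly when it is a Gurari{\u\i} space, and by Lusky such a space is isometrically unique. Hence, among separable Banach spaces, the separable Gurari{\u\i} space is the only one that is a super-ideal in every superspace.

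The one point I would flag, rather than a mathematical obstacle, is the scope of the wording: the corollary is to be understood within the class of separable spaces and up to isometric isomorphism, since non-separable Gurari{\u\i} spaces exist and, by Theorem~\ref{thm:gurarii}, are likewise super-ideals in every superspace. There is no further difficulty to overcome; the corollary is simply the specialization of the equivalence in Theorem~\ref{thm:gurarii} to the separable setting, where Lusky's uniqueness theorem pins down the space uniquely.
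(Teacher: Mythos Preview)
Your proposal is correct and matches the paper's intent: the paper states Corollary~\ref{cor:gurarii1} without proof, as an immediate consequence of Theorem~\ref{thm:gurarii} together with Lusky's uniqueness result \cite{Lus} already recalled after Definition~\ref{defn:gurarii}, which is precisely the argument you give. Your remark that the statement must be read within the class of separable spaces (since non-separable Gurari{\u\i} spaces exist and, by Theorem~\ref{thm:gurarii}, are likewise super-ideals in every superspace) is a valid and useful clarification of the wording.
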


The Daugavet property for Lindenstrauss spaces was studied by Werner
(see \cite[Theorem~3.5]{Wer97}). We note the following.

\begin{cor}\label{cor:gurarii2} Gurariy spaces
  enjoy the Daugavet property and hence the strong diameter 2 property.
\end{cor}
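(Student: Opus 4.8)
The plan is to assemble three facts already in hand. The characterization of Gurari{\u\i} spaces (Theorem~\ref{thm:gurarii}) says that a Gurari{\u\i} space $X$ is a super-ideal in \emph{every} superspace; Proposition~\ref{prop:daugavet} says that super-ideals inherit the Daugavet property from their superspace; and Proposition~\ref{prop:diam2daugavet} supplies, for any Banach space, a concrete superspace enjoying the Daugavet property. Chaining these together yields the result.

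Concretely, let $X$ be a Gurari{\u\i} space. First I would invoke Proposition~\ref{prop:diam2daugavet} to embed $X$ as a (1-complemented) subspace of some Banach space $Y$ with the Daugavet property; only the embedding is needed here, not the complementation. Next, since $X$ is Gurari{\u\i}, Theorem~\ref{thm:gurarii} guarantees that $X$ is a super-ideal in every superspace, and in particular in this $Y$. Proposition~\ref{prop:daugavet} then applies verbatim: a super-ideal in a space with the Daugavet property itself has the Daugavet property, so $X$ has the Daugavet property.

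For the final clause, I would simply recall that the Daugavet property implies the strong diameter 2 property (\cite[Theorem~4.4]{ALN}, exactly the implication already used at the end of the proof of Proposition~\ref{prop:diam2daugavet}). Hence $X$ enjoys the strong diameter 2 property as well. There is no real obstacle in this argument---it is a three-step syllogism---and the only point deserving a moment's attention is recognizing that Proposition~\ref{prop:diam2daugavet} is precisely the device that produces a Daugavet superspace into which the super-ideal hypothesis of Theorem~\ref{thm:gurarii} can be fed.
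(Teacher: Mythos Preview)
Your proof is correct and follows essentially the same route as the paper: embed $X$ in a superspace with the Daugavet property, invoke Theorem~\ref{thm:gurarii} to get that $X$ is a super-ideal there, and apply Proposition~\ref{prop:daugavet}. The only cosmetic difference is that the paper names the superspace explicitly as $C(B_{X^*},\text{weak}^*)$ (which has the Daugavet property since $B_{X^*}$ has no isolated points), whereas you package this step via Proposition~\ref{prop:diam2daugavet}; the logical structure is identical.
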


\begin{proof}
  Let $X$ be a Gurariy space.
  $C(B_{\Xs},\mbox{weak}^\ast)$ has the
  Daugavet property (cf. e.g. \cite{W}),
  and since $X$ embeds isometrically into
  $C(B_{\Xs}, \mbox{weak}^\ast)$ the result follows Theorem
  \ref{thm:gurarii} and Proposition \ref{prop:daugavet}. 
\end{proof}

It is clear from Theorems \ref{thm:lindenstrauss} and
\ref{thm:gurarii} that a Gurariy space is a Lindenstrauss
space. Thus, the last part of Corollary
\ref{cor:gurarii2} is also a particular case of the following
result (see also \cite[Corollary 3.6]{ABL}).

\begin{prop} The bidual of every infinite-dimensional
  Lindenstrauss space has the strong diameter 2 property. In
  particular every Lindenstrauss space has the strong diameter 2 property. 
\end{prop}

\begin{proof}
  Let $X$ be a Lindenstrauss space.
  It is classical that $X^*$ is order isometric
  to an $\ell_1$-sum of $L_1(\mu_a)$-spaces
  where $\mu_a$ is a probability measure  (see
  e.g. \cite[Theorem~1.b.2]{LiTz1}).
  
  Now there are two possibilities. Either
  every $\mu_a$ is purely atomic, and then
  $X^*$ is isometric to $\ell_1(\Gamma)$
  for some set $\Gamma$, or
  one $\mu_a$ is not purely atomic (see \cite[Theorem~5.14.9]{Lacey}
  for a concrete representation). In the first case $\Xss =
  \ell_\infty(\Gamma)$ which has the strong diameter 2 property.
  In the latter case we may write
  $X^{**} = Z \oplus_{\infty} L_\infty(\mu_a)$
  and thus $X^{**}$ has the strong diameter 2
  property by Proposition~4.6 in \cite{ALN}.
  In either case $X^{**}$ has the
  strong diameter 2 property.
\end{proof}

Note that not all Lindenstrauss spaces, e.g. $c_0$,
have the Daugavet property, see also \cite[p. ~79]{W}.

\section*{Acknowledgements}
We would like to thank Wies{\l}aw Kubi{\'s} for fruitful discussions
on the topic of Section \ref{sec4}. 


\providecommand{\bysame}{\leavevmode\hbox to3em{\hrulefill}\thinspace}
\providecommand{\MR}{\relax\ifhmode\unskip\space\fi MR }
\providecommand{\MRhref}[2]{%
  \href{http://www.ams.org/mathscinet-getitem?mr=#1}{#2}
}
\providecommand{\href}[2]{#2}


\begin{thebibliography}{ABGRP11}

\bibitem[ABG10]{ABG}
M.~D. Acosta and J.~Becerra~Guerrero, \emph{Weakly open sets in the unit ball
  of some {B}anach spaces and the centralizer}, J. Funct. Anal. \textbf{259}
  (2010), no.~4, 842--856. \MR{2652174 (2011e:46078)}

\bibitem[ABGLP]{ABL}
M.~D. Acosta, J.~Becerra~Guerrero, and G.~L{\'o}pez-P{\'e}rez,
  \emph{Stability results of diameter two proprties}, J. Convex Anal.,
  to appear.

\bibitem[AK06]{AlKal}
F.~Albiac and N.~J. Kalton, \emph{Topics in {B}anach space theory}, Graduate
  Texts in Mathematics, vol. 233, Springer, New York, 2006. \MR{2192298
  (2006h:46005)}

\bibitem[ALN13]{ALN}
T.~A. Abrahamsen, V.~Lima, and O.~Nygaard, \emph{Remarks on diameter $2$
  properties}, (to appear in) J. Conv. Anal. \textbf{20} (2013).

\bibitem[Fak72]{Fak}
H.~Fakhoury, \emph{S\'elections lin\'eaires associ\'ees au th\'eor\`eme de
  {H}ahn-{B}anach}, J. Functional Analysis \textbf{11} (1972), 436--452.
  \MR{0348457 (50 \#955)}


\bibitem[GK11]{GarKu}
J.~Garbulin{\'s}ka and W.~Kubi{\'s}, \emph{Remarks on {G}urari{\u\i} spaces},
  Extracta Math. \textbf{26} (2011), no.~2, 235--269.

\bibitem[GKS93]{GKS}
G.~Godefroy, N.~J. Kalton, and P.~D. Saphar, \emph{Unconditional ideals in
  {B}anach spaces}, Studia Math. \textbf{104} (1993), no.~1, 13--59.
  \MR{1208038 (94k:46024)}

\bibitem[Gur66]{Gur}
V.~I. Gurari{\u\i}, \emph{Spaces of universal placement, isotropic spaces and a
  problem of {M}azur on rotations of {B}anach spaces}, Sibirsk. Mat. \v Z.
  \textbf{7} (1966), 1002--1013. \MR{0200697 (34 \#585)}

\bibitem[HLP]{HLP}
R.~Haller, J.~Langemets, and M.~P{\~o}ldvere, Private communication.

\bibitem[HL84]{MR735420}
P.~Harmand and {\AA}.~Lima, \emph{Banach spaces which are {$M$}-ideals
in their biduals}, Trans. Amer. Math. Soc. \textbf{283} (1984), no.~1,
253--264. \MR{735420 (86b:46016)}

\bibitem[HM82]{MR675426}
S.~Heinrich and P.~Mankiewicz, \emph{Applications of ultrapowers to the uniform
  and {L}ipschitz classification of {B}anach spaces}, Studia Math. \textbf{73}
  (1982), no.~3, 225--251. \MR{675426 (84h:46026)}

\bibitem[HWW93]{HWW}
P.~Harmand, D.~Werner, and W.~Werner, \emph{{$M$}-ideals in {B}anach spaces and
  {B}anach algebras}, Lecture Notes in Mathematics, vol. 1547, Springer-Verlag,
  Berlin, 1993. \MR{1238713 (94k:46022)}

\bibitem[JRZ71]{MR0280983}
W.~B. Johnson, H.~P. Rosenthal, and M.~Zippin, \emph{On bases, finite
  dimensional decompositions and weaker structures in {B}anach spaces}, Israel
  J. Math. \textbf{9} (1971), 488--506. \MR{0280983 (43 \#6702)}

\bibitem[Kal84]{Kal}
N.~J. Kalton, \emph{Locally complemented subspaces and
  {${\mathcal{L}}_{p}$}-spaces for {$0<p<1$}}, Math. Nachr. \textbf{115}
  (1984), 71--97. \MR{755269 (86h:46006)}

\bibitem[KSSW00]{KSSW}
V.~M. Kadets, R.~V. Shvidkoy, G.~G. Sirotkin, and D.~Werner, \emph{Banach
  spaces with the {D}augavet property}, Trans. Amer. Math. Soc. \textbf{352}
  (2000), no.~2, 855--873. \MR{1621757 (2000c:46023)}

\bibitem[Lac74]{Lacey}
H.~E. Lacey, \emph{The isometric theory of classical {B}anach spaces},
  Springer-Verlag, New York, 1974, Die Grundlehren der mathematischen
  Wissenschaften, Band 208. \MR{0493279 (58 \#12308)}


\bibitem[LL09]{LL}
V.~Lima and {\AA}.~Lima, \emph{Strict u-ideals in {B}anach spaces}, Studia
  Math. \textbf{195} (2009), no.~3, 275--285. \MR{2559177 (2010g:46014)}

\bibitem[LT79]{LiTz1}
J.~Lindenstrauss and L.~Tzafriri, \emph{Classical {B}anach spaces. {II}},
  Ergebnisse der Mathematik und ihrer Grenzgebiete [Results in Mathematics and
  Related Areas], vol.~97, Springer-Verlag, Berlin, 1979, Function spaces.
  \MR{540367 (81c:46001)}

\bibitem[Lus76]{Lus}
W.~Lusky, \emph{The {G}urarij spaces are unique}, Arch. Math. (Basel)
  \textbf{27} (1976), no.~6, 627--635. \MR{0433198 (55 \#6177)}

\bibitem[NW01]{NW}
O.~Nygaard and D.~Werner, \emph{Slices in the unit ball of a uniform algebra},
  Arch. Math. (Basel) \textbf{76} (2001), no.~6, 441--444. \MR{1831500
  (2002e:46057)}

\bibitem[OP07]{MR2262909}
E.~Oja and M.~P{\~o}ldvere, \emph{Principle of local reflexivity revisited},
  Proc. Amer. Math. Soc. \textbf{135} (2007), no.~4, 1081--1088 (electronic).
  \MR{2262909}

\bibitem[Rao01]{rao}
T.~S. S. R.~K. Rao, \emph{On ideals in {B}anach spaces}, Rocky Mountain J.
  Math. \textbf{31} (2001), no.~2, 595--609. \MR{1840956 (2002d:46018)}

\bibitem[Shv00]{Shv}
R.~V. Shvydkoy, \emph{Geometric aspects of the {D}augavet property}, J. Funct.
  Anal. \textbf{176} (2000), no.~2, 198--212. \MR{1784413 (2001h:46019)}

\bibitem[Wer97]{Wer97}
Dirk Werner, \emph{The {D}augavet equation for operators on function spaces},
  J. Funct. Anal. \textbf{143} (1997), no.~1, 117--128. \MR{1428119
  (98c:47025)}

\bibitem[Wer01]{W}
D.~Werner, \emph{Recent progress on the {D}augavet property}, Irish Math. Soc.
  Bull. (2001), no.~46, 77--97. \MR{1856978 (2002i:46014)}

\bibitem[Woj92]{Woj92}
P.~Wojtaszczyk, \emph{Some remarks on the {D}augavet equation}, Proc. Amer.
  Math. Soc. \textbf{115} (1992), no.~4, 1047--1052. \MR{1126202 (92k:47041)}


\end{thebibliography}
\end{document}